\newcommand{\de}{\delta}
\newcommand{\la}{\lambda}
\newcommand{\om}{\omega}
\newcommand{\vv}{\varphi}
\theoremstyle{plain}
\numberwithin{equation}{section}
\newtheorem{thm}{Theorem}[section]
\newtheorem{lem}[thm]{Lemma}
\newtheorem{prop}[thm]{Proposition}
\newtheorem{cor}[thm]{Corollary}
\theoremstyle{definition}
\newtheorem{alg}[thm]{Algorithm}
\newtheorem{ip}[thm]{Inverse Problem}
\theoremstyle{remark}
\begin{document}

\begin{center}
{\Large\bf Uniform stability for the Hochstadt-Lieberman problem}
\\[0.5cm]
{\bf Natalia P. Bondarenko}
\end{center}

\vspace{0.5cm}

{\bf Abstract.} In this paper, we prove the uniform stability of the Hochstadt-Lieberman problem, which consists in the recovery of the Sturm-Liouville potential on a half-interval from the spectrum and the known potential on the other half-interval. For this purpose, we reduce the half-inverse problem to the complete one for the unknown potential. Our method relies on the uniform stability for the direct and inverse Sturm-Liouville problems, for recovering sine-type functions from their zeros, and the uniform boundedness of Riesz bases of sines and cosines.

\medskip

{\bf Keywords:} inverse spectral problems; half-inverse problem; uniform stability.

\medskip

{\bf AMS Mathematics Subject Classification (2020):} 34A55 34B05 34B09 34L40

\section{Introduction} \label{sec:intr}

Consider the Sturm-Liouville eigenvalue problem
\begin{gather} \label{eqv}
-y'' + q(x) y = \rho^2 y, \quad x \in (0, 2\pi), \\ \label{bc}
y'(0) - h y(0) = 0, \quad y'(2\pi) + H y(2\pi) = 0,
\end{gather}
where $q$ is a real-valued function of $L_2(0,2\pi)$, $h, H \in \mathbb R$, and $\rho^2$ is the spectral parameter, $\arg \rho \in \left[ -\tfrac{\pi}{2}, \tfrac{\pi}{2}\right)$. Denote by $\{ \rho_n^2 \}_{n \ge 1}$ the eigenvalues of the boundary value problem \eqref{eqv}--\eqref{bc}, numbered in the increasing order: $\rho_n^2 < \rho_{n+1}^2$, $n \ge 1$.

The Hochstadt-Lieberman inverse problem, which is also called the half-inverse problem and the inverse Sturm-Liouville problem with mixed data, is formulated as follows:

\begin{ip} \label{ip:HL}
Given the mixed data $\mathfrak S := \bigl( q(x)_{|x \in (\pi, 2\pi)}; \, H; \, \{\rho_n\}_{n \ge 1} \bigr)$, find $q(x)$ on $(0,\pi)$ and $h$.
\end{ip}

The original paper by Hochstadt and Lieberman \cite{HL78} is concerned only with the uniqueness for solution of the half-inverse problem. 
Methods for finding the solution, basing on transformation operators, were proposed by Sakhnovich \cite{Sakh01} and by Hryniv and Mykytyuk \cite{HM04}. Another reconstruction method, based on interpolation of entire functions, was developed in parallel by Buterin \cite{But09, But11} and by Martinyuk and Pivovarchik \cite{MP10}.
Local stability of the Hochstadt-Lieberman inverse problem, to the best of the author's knowledge, for the first time has been proved in \cite{BB17} for the self-adjoint case and in \cite{Bond20} for the non-self-adjoint case. 

This paper aims to prove the uniform stability of Inverse Problem~\ref{ip:HL}. Unconditional uniform stability for the classical inverse Sturm-Liouville problems by two spectra and by the spectral function has been shown by Savchuk and Shkalikov \cite{SS10, SS13} (see also their previous works \cite{SS06, SS08}) and by Hryniv \cite{Hryn11}. For non-self-adjoint Sturm-Liouville operators, uniform stability of inverse problems was studied in \cite{Bond24}. Recently, a significant progress was achieved in investigation of the uniform stability for inverse spectral problems for various classes of nonlocal operators (see, e.g., \cite{But21, BD22, Kuz23}).
In contrast to previous studies, while analyzing the uniform stability for the Hochstadt-Lieberman problem, we have to consider perturbations not only for the spectrum $\{ \rho_n^2 \}_{n \ge 1}$ but also for the given part of the potential $q(x)_{|x \in (\pi, 2\pi)}$, which makes the problem more challenging.

Proceed to formulating of our main result.
The eigenvalues possess the well-known asymptotics (see, e.g., \cite{FY01}):
\begin{equation} \label{asympt}
\rho_n = \frac{n-1}{2} + \frac{\om}{\pi n} + \frac{\varkappa_n}{n}, \quad \{\varkappa_n\} \in l_2, \quad  n \ge 1,
\end{equation}
where
\begin{equation} \label{defom}
\om := h + H + \frac{1}{2} \int_0^{2\pi} q(x) \, dx.
\end{equation}
Knowing $\{ \rho_n \}_{n \ge 1}$, one can determine $\om$ and $\{ \varkappa_n \}_{n \ge 1}$ as follows:
\begin{equation} \label{findom}
\om = \lim_{n \to \infty} \left( \rho_n - \frac{n-1}{2} \right) \pi n, \quad \varkappa_n = n \left( \rho_n - \frac{n-1}{2} \right) - \frac{\om}{\pi}.
\end{equation}

Consider two collections of mixed data $\mathfrak S^{(i)} = \bigl( q^{(i)}(x)_{|x \in (\pi, 2\pi)}; \, H^{(i)}; \, \{\rho_n^{(i)}\}_{n \ge 1} \bigr)$, $i = 1, 2$. We use the upper indices $^{(i)}$ for denoting objects related to the corresponding data $\mathfrak S^{(i)}$ ($i = 1, 2$).
Define the distance
$$
d(\mathfrak S^{(1)}, \mathfrak S^{(2)}) := \| q^{(1)} - q^{(2)} \|_{L_2(\pi, 2\pi)} + |H^{(1)} - H^{(2)}| + |\om^{(1)} - \om^{(2)}| + \sqrt{\sum_{n \ge 1} |\varkappa_n^{(1)} - \varkappa_n^{(2)}|^2}.
$$

For $Q > 0$, introduce the ball
$$
P_Q := \bigl\{ (q, h, H) \in L_2(0,2\pi) \times \mathbb R \times \mathbb R \colon \| q \|_{L_2(0,2\pi)} + |h| + |H| \le Q \bigr\}.
$$

Below, we denote by $C(Q)$ ($C(Q_+)$, $C(Q_-)$, etc.) various positive constants which depend only on $Q$ ($Q_+$, $Q_-$, etc.).

The uniform stability of Inverse Problem~\ref{ip:HL} is presented in the following theorem.

\begin{thm} \label{thm:main}
For any $(q^{(1)}, h^{(1)}, H^{(1)})$ and $(q^{(2)}, h^{(2)}, H^{(2)})$ in $P_Q$ and the corresponding mixed data $\mathfrak S^{(1)}$ and $\mathfrak S^{(2)}$, the following uniform estimate holds:
\begin{equation} \label{uni}
\| q^{(1)} - q^{(2)} \|_{L_2(0,\pi)} + |h^{(1)} - h^{(2)}| \le C(Q)\,d(\mathfrak S^{(1)}, \mathfrak S^{(2)}).
\end{equation}
Thus, the inverse mapping $\mathfrak S \mapsto \bigl( q(x)_{|x \in (0,\pi)}; \, h \bigr)$ is Lipschitz continuous on the set of mixed data corresponding to $(q, h, H) \in P_Q$.
\end{thm}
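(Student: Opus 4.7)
The strategy, following the reduction scheme of Buterin and Martinyuk-Pivovarchik, is to reduce Inverse Problem~\ref{ip:HL} to a complete inverse Sturm-Liouville problem on $(0,\pi)$ while tracking a Lipschitz estimate at each stage. Let $C(x,\rho)$ and $S(x,\rho)$ denote the solutions of \eqref{eqv} on $(\pi,2\pi)$ determined by Cauchy data at $x=\pi$: $C(\pi,\rho)=1$, $C'(\pi,\rho)=0$, $S(\pi,\rho)=0$, $S'(\pi,\rho)=1$, and set
\[
\Delta_1(\rho) := C'(2\pi,\rho) + H C(2\pi,\rho), \qquad \Delta_2(\rho) := S'(2\pi,\rho) + H S(2\pi,\rho),
\]
which depend only on the known data $q|_{(\pi,2\pi)}$ and $H$. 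Let $\vv(x,\rho)$ be the solution on $(0,2\pi)$ with $\vv(0,\rho)=1$, $\vv'(0,\rho)=h$; splicing at $x=\pi$ yields the key identity
\[
\Delta(\rho) := \vv'(2\pi,\rho)+H\vv(2\pi,\rho) = \vv(\pi,\rho)\Delta_1(\rho) + \vv'(\pi,\rho)\Delta_2(\rho),
\]
and $\Delta$ is the characteristic function of the full problem, with zeros $\{\rho_n\}_{n\ge 1}$.

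First, I would reconstruct $\Delta^{(i)}(\rho)$ from its zeros via its Hadamard product and invoke the uniform stability of sine-type function recovery from zeros to bound a suitable norm of $\Delta^{(1)}-\Delta^{(2)}$ by $|\om^{(1)}-\om^{(2)}|+\|\{\varkappa_n^{(1)}-\varkappa_n^{(2)}\}\|_{l_2}$. Second, by the uniform stability of the direct Sturm-Liouville problem on $(\pi,2\pi)$, the functions $\Delta_j^{(i)}$ and their zero sequences $\{\mu_n\}$ (of $\Delta_1$) and $\{\nu_n\}$ (of $\Delta_2$) depend Lipschitz continuously on $(q|_{(\pi,2\pi)},H)$ in $P_Q$. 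Third, evaluating the key identity at the zeros of $\Delta_1$ and $\Delta_2$ gives
\[
\vv'(\pi,\mu_n) = \frac{\Delta(\mu_n)}{\Delta_2(\mu_n)}, \qquad \vv(\pi,\nu_n) = \frac{\Delta(\nu_n)}{\Delta_1(\nu_n)},
\]
and the previous two steps furnish Lipschitz bounds (in $l_2$ with weights) for these sampled values.

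Fourth, since $\vv(\pi,\rho)$ and $\vv'(\pi,\rho)$ are entire functions of exponential type $\pi$, I would reconstruct them from the sampled values above using the fact that the exponential systems generated by $\{\mu_n\}, \{\nu_n\}$ form Riesz bases of $L_2(-\pi,\pi)$ whose bounds remain \emph{uniform} over $P_Q$. The resulting entire functions are precisely the characteristic functions of the Sturm-Liouville problem on $(0,\pi)$ (with boundary condition $y'(0)-hy(0)=0$ and either Dirichlet or Neumann at $\pi$), so their zeros constitute two spectra forming complete inverse data for $(q|_{(0,\pi)},h)$. Applying the uniform stability for the classical inverse Sturm-Liouville problem by two spectra on $(0,\pi)$ (Savchuk-Shkalikov, Hryniv) then delivers \eqref{uni}. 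The principal obstacle is in the fourth step: because the interpolation nodes $\{\mu_n\},\{\nu_n\}$ themselves depend on the varying data $(q|_{(\pi,2\pi)},H)\in P_Q$, one must ensure that the Riesz basis constants, the interpolation error, and the resulting estimates for $\vv(\pi,\rho)$ and $\vv'(\pi,\rho)$ all remain controlled by a constant $C(Q)$ independent of the particular configuration, which is the exact purpose of the uniform Riesz basis bounds for perturbed sine/cosine systems invoked in the abstract.
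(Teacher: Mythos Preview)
Your proposal is correct and follows essentially the same interpolation strategy as the paper: the paper's $\psi(\pi,\rho)$ and $\psi'(\pi,\rho)$ coincide (up to sign) with your $\Delta_2$ and $\Delta_1$, and the paper carries out exactly your steps 1--4 (Lemma~2.4 for $\Delta$, Lemma~2.1 for the direct problem on $(\pi,2\pi)$, Lemma~3.2 for the sampled values, Lemma~3.3 for the Riesz-basis reconstruction with uniform constants). The only cosmetic difference is the final reduction: instead of extracting the two spectra of $\vv(\pi,\rho)$ and $\vv'(\pi,\rho)$ and invoking Savchuk--Shkalikov/Hryniv, the paper packages the reconstructed functions as the Cauchy data $(\mathscr K,\mathscr K_0,\om_-)$ and applies the uniform stability of the inverse problem by Cauchy data (Proposition~2.4), which is an equivalent and slightly more direct endpoint.
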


The proof of Theorem~\ref{thm:main} is based on using the uniform stability of the direct Sturm-Liouville problem \cite{SS10}, the uniform boundedness of Riesz bases of sines and cosines \cite{Hryn10}, the uniform stability for recovering sine-type functions from their zeros \cite{But22} and for solving the inverse Sturm-Liouville problem by the Cauchy data \cite{Bond24}.

\section{Preliminaries}

Denote by $\varphi(x, \rho)$ and $\psi(x, \rho)$ the solutions of equation \eqref{eqv} satisfying the initial conditions
$$
\vv(0,\rho) = 1, \quad \vv'(0, \rho) = h, \quad
\psi(2\pi, \rho) = 1,  \quad \psi'(2\pi, \rho) = -H.
$$
Then, the eigenvalues of the problem \eqref{eqv}--\eqref{bc} coincide with the squares of the zeros of the characteristic function
\begin{equation} \label{defDelta}
\Delta(\rho) := \vv'(\pi, \rho) \psi(\pi, \rho) - \vv(\pi, \rho) \psi'(\pi, \rho).
\end{equation}

The functions from \eqref{defDelta} admit the following representations, which are obtained by using transformation operators (see \cite{Mar11}):
\begin{align} \label{intvv}
\vv(\pi, \rho) & = \cos \rho \pi + \om_- \frac{\sin \rho \pi}{\rho} + \frac{1}{\rho} \int_0^{\pi} \mathscr K_0(t) \sin \rho t \, dt, \\ \label{intvvp}
\vv'(\pi, \rho) & = -\rho \sin \rho \pi + \om_- \cos \rho \pi + \int_0^{\pi} \mathscr K(t) \cos \rho t \, dt, \\
\psi(\pi, \rho) & = \cos \rho \pi + \om_+ \frac{\sin \rho \pi}{\rho} + \frac{1}{\rho} \int_0^{\pi} \mathscr N_0(t) \sin \rho t \, dt, \\ \label{intpsip}
\psi'(\pi, \rho) & = \rho \sin \rho \pi - \om_+ \cos \rho \pi + \int_0^{\pi} \mathscr N(t) \cos \rho t \, dt, \\ \label{intDelta}
\Delta(\rho) & = -\rho \sin 2 \rho \pi + \om \cos 2 \rho \pi + \int_0^{\pi} \mathscr M(t) \cos 2 \rho t \, dt,
\end{align}
where $\om$ is defined by \eqref{defom},
\begin{equation} \label{om+}
\om_- := h + \frac{1}{2} \int_0^{\pi} q(x) \, dx, \quad
\om_+ := H + \frac{1}{2} \int_{\pi}^{2\pi} q(x) \, dx, \quad
\om_- + \om_+ =\om,
\end{equation}
and $\mathscr K_0$, $\mathscr K$, $\mathscr N$, $\mathscr N_0$, $\mathscr M$ are real-valued functions of $L_2(0,\pi)$. 

For $Q_+ > 0$, consider the ball
$$
R_{Q_+} := \{ (q, H) \in L_2(\pi, 2\pi) \times \mathbb R \colon \| q \|_{L_2(\pi, 2\pi)} + |H| \le Q_+ \}.
$$
Relying on the construction of transformation operators \cite{Mar11} and using the technique of \cite{But21}, one can easily show that
\begin{align} \label{estN}
& \quad \| \mathscr N_0 \|_{L_2(0,\pi)} + \| \mathscr N \|_{L_2(0, \pi)} \le C(Q_+) \quad \text{for} \:\: (q, H) \in R_{Q_+}, \\ \nonumber
\| \mathscr N_0^{(1)} - & \mathscr N_0^{(2)} \|_{L_2(0, \pi)} + 
\| \mathscr N^{(1)} - \mathscr N^{(2)} \|_{L_2(0, \pi)} \\ \label{difN}
& \le C(Q_+) \bigl( \| q^{(1)} - q^{(2)} \|_{L_2(\pi, 2\pi)} + |H^{(1)} - H^{(2)}|\bigr) \quad \text{for} \:\: (q^{(i)}, H^{(i)}) \in R_{Q_+}, \:\: i = 1, 2.
\end{align}

The squares of the zeros of the functions $\psi(\pi, \rho)$ and $\psi'(\pi, \rho)$ are the eigenvalues of the boundary value problems for equation \eqref{eqv} on $(\pi, 2\pi)$ with the boundary conditions
$$
y(\pi) = 0, \:\: y'(2\pi) + H y(2\pi) = 0 \quad \text{and} \quad y'(\pi) = 0, \:\: y'(2\pi) + H y(2\pi) = 0,
$$
respectively. 
Denote these eigenvalues by $\{ \mu_n^2 \}_{n \ge 1}$ and $\{ \nu_n^2 \}_{n \ge 1}$, respectively, in the increasing order. They possess the standard asymptotics (see, e.g., \cite{FY01}):
\begin{align} \label{asymptmu}
\mu_n & = n - \frac{1}{2} + \frac{\om_+}{\pi n} + \frac{\xi_n}{n}, \quad \{\xi_n \} \in l_2, \\ \nonumber
\nu_n & = n - 1 + \frac{\om_+}{\pi n} + \frac{\tau_n}{n}, \quad \{ \tau_n \} \in l_2.
\end{align}

Without loss of generality, we will assume that 
\begin{equation} \label{bound0}
\mu_n, \, \nu_n \ge \frac{1}{2}, \quad n \ge 1.
\end{equation}
This condition can be achieved by a shift $q(x) := q(x) + s$, $\rho^2 := \rho^2 + s$, where $s$ is a large positive constant. More precisely, for every $Q_+ > 0$, one can choose the same shift $s = s(Q_+)$ for all $(q, H) \in R_{Q_+}$ to get the lower bound \eqref{bound0}.

We will need the following lemma, which is analogous to the direct part of Theorem 2.12 from the paper \cite{SS10} by Savchuk and Shkalikov.

\begin{lem} \label{lem:stabd}
(i) For every $Q_+ > 0$, there exist $\delta = \delta(Q_+) > 0$ and $\Omega = \Omega(Q_+)$ such that, for any $(q, H) \in R_{Q_+}$ satisfying \eqref{bound0}, the following estimates hold:
\begin{align*} 
& \mu_{n+1} - \mu_n \ge \de, \:\: n \ge 1, \quad |\om_+| + \| \{ \xi_n \} \|_{l_2} \le \Omega, \\ \nonumber
& \nu_{n+1} - \nu_n \ge \de, \:\: n \ge 1, \quad |\om_+| + \| \{ \tau_n \} \|_{l_2} \le \Omega.
\end{align*}
Furthermore, for any two parameter pairs $(q^{(i)}, H^{(i)}) \in R_{Q_+}$, $i = 1, 2$, both satisfying \eqref{bound0}, the corresponding spectra fulfill the uniform stability estimates
\begin{equation}  \label{difxin}
|\om_+^{(1)} - \om_+^{(2)}| + \bigl\| \{ \xi_n^{(1)} - \xi_n^{(2)} \} \bigr\|_{l_2} + \bigl\| \{ \tau_n^{(1)} - \tau_n^{(2)} \} \bigr\|_{l_2} \le
C(Q_+) \bigl( \| q^{(1)} - q^{(2)} \|_{L_2(\pi, 2\pi)} + |H^{(1)} - H^{(2)}|\bigr).
\end{equation}
Thus, the mapping $\bigl( q(x)_{|x \in (\pi, 2\pi)}, H \bigr) \mapsto \left( \om_+, \{ \xi_n \}_{n \ge 1}, \{ \tau_n \}_{n \ge 1} \right)$ is Lipschitz continuous on $R_{Q_+}$.

(ii)  For every $Q > 0$, there exists $\Omega = \Omega(Q)$ such that, for any $(q, h, H) \in P_Q$, there holds $\{ \rho_n \}_{n \ge 1} \in B_{\Omega}$, where
$$
B_{\Omega} := \bigl\{ \{ \rho_n \}_{n \ge 1} \colon \eqref{asympt} \:\: \text{is fulfilled and} \:\: |\om| + \| \{ \varkappa_n \}\|_{l_2} \le \Omega \bigr\}.
$$
\end{lem}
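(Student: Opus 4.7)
The strategy is to reduce each of the two eigenvalue problems defining $\{\mu_n^2\}$ and $\{\nu_n^2\}$ on $(\pi, 2\pi)$ to a standard regular Sturm-Liouville problem on $(0,\pi)$ and then invoke the direct uniform stability result of Savchuk-Shkalikov (Theorem 2.12 of \cite{SS10}); part (ii) is obtained by applying the same theorem directly to the full problem \eqref{eqv}--\eqref{bc}.

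For part (i), I would first perform the translation $\tilde x = x - \pi$, $\tilde q(\tilde x) := q(\tilde x + \pi)$, under which $\{\mu_n^2\}$ becomes the spectrum of the Dirichlet-Robin problem $-\tilde y'' + \tilde q \tilde y = \mu^2 \tilde y$ on $(0,\pi)$ with $\tilde y(0) = 0$, $\tilde y'(\pi) + H \tilde y(\pi) = 0$, while $\{\nu_n^2\}$ corresponds to the analogous Neumann-Robin problem. The translation preserves the $L_2$-norm of the potential, so $R_{Q_+}$ is an admissible ball for the reduced problems, and $\om_+$ equals the standard half-sum occurring in the asymptotics of each reduced problem. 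Theorem 2.12 of \cite{SS10}, whose proof adapts from Dirichlet-Dirichlet to arbitrary separated regular boundary conditions with the required ingredients already visible in \eqref{intpsip} and \eqref{estN}--\eqref{difN}, then yields the uniform bound $|\om_+| + \|\{\xi_n\}\|_{l_2} \le \Omega(Q_+)$ and its analog for $\{\tau_n\}$, together with the Lipschitz dependence of the two spectra on the parameters. The bound on $|\om_+^{(1)} - \om_+^{(2)}|$ follows directly from \eqref{om+} and Cauchy-Schwarz; combining these contributions produces \eqref{difxin}.

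For the uniform spectral gap $\delta(Q_+)$, the uniform $l_2$-control on $\{\xi_n\}$ gives $\mu_{n+1} - \mu_n \ge 1 - C(Q_+)/n \ge 1/2$ for all $n \ge N_0(Q_+)$. For the finite range $n < N_0$, assumption \eqref{bound0} keeps each $\mu_n$ bounded away from $0$, and a Rouché argument for the entire function $\rho \mapsto \rho\, \psi(\pi, \rho)$, using the uniform integral bound \eqref{estN} in the representation \eqref{intpsip}, localizes each $\mu_n$ in a disc around $n - 1/2$ whose radius depends only on $Q_+$. Choosing parameters so that these discs are pairwise disjoint yields the uniform gap; the argument for $\{\nu_n\}$ is analogous.

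Part (ii) is an immediate application of Theorem 2.12 of \cite{SS10} to the full problem \eqref{eqv}--\eqref{bc} on $(0, 2\pi)$: for $(q, h, H) \in P_Q$, that theorem produces the asymptotic expansion \eqref{asympt} with the uniform control $|\om| + \|\{\varkappa_n\}\|_{l_2} \le \Omega(Q)$, which is exactly the definition of $B_\Omega$. The principal technical obstacle is the uniform spectral gap in part (i) for small indices: simplicity of the spectrum provides a positive gap for each individual $(q,H)$, but extracting a bound depending only on $Q_+$ cannot rely on compactness of $R_{Q_+}$ in $L_2(\pi, 2\pi) \times \mathbb R$, so the Rouché/counting argument based on the uniform integral estimates \eqref{estN} is the cleanest substitute and is essentially what is carried out in \cite{SS10} for the reduced problems.
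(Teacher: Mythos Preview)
Your proposal is correct and matches the paper's approach: the paper does not give an independent proof of this lemma but simply states it as ``analogous to the direct part of Theorem 2.12'' of Savchuk--Shkalikov \cite{SS10}, which is precisely the reduction you outline. In fact you supply more detail (the translation to $(0,\pi)$, the Rouch\'e argument for the uniform gap at small indices) than the paper itself, which treats the lemma as a known consequence of \cite{SS10}.
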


Recall that a sequence $V = \{ v_n \}_{n \ge 1}$ in a Hilbert space $\mathscr H$ is called \textit{a Riesz basis} if $V$ is equivalent to some orthonormal basis $\{ e_n \}_{n \ge 1}$ in $\mathscr H$, that is, there exists a bounded and boundedly invertible linear operator $T$ in $\mathscr H$ such that $v_n = T e_n$, $n \ge 1$. A family $\mathscr V$ of Riesz bases is called \textit{uniformly bounded} if the norms $\| T \|$ and $\| T^{-1} \|$ are uniformly bounded for all $V \in \mathscr V$. The uniform boundedness of Riesz bases of exponentials, sines, and cosines has been studied in \cite{Hryn10}. In particular, we will rely on the following result.

\begin{prop}[\cite{Hryn10}] \label{prop:Rb}
Suppose that $\Omega > 0$, $\de \in (0,1)$, and $s \in [0,1]$.
Denote by $\mathscr L_{\Omega,\de,s}$ the family of the increasing sequences of non-negative numbers $\{\la_n \}_{n \ge 1}$ such that $\la_{n + 1} - \la_n \ge \de$ for all $n \ge 1$ and $\| \{ \la_n - n + s \} \|_{l_2} \le \Omega$.
Then the cosine sequences $\{ \cos \la_n x \}_{n \ge 1}$ for $\{ \la_n \}_{n \ge 1} \in \mathscr L_{\Omega,\de,s}$ with a fixed $s \in [1/2, 1]$ and the sine sequences $\{ \sin \la_n x \}_{n \ge 1}$ for $\{ \la_n \}_{n \ge 1} \in \mathscr L_{\Omega,\de,s}$ with a fixed $s \in [0,1/2]$ form uniformly bounded families of Riesz bases in $L_2(0, \pi)$.
\end{prop}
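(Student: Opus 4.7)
The plan is to establish the cosine statement (the sine statement follows by an analogous argument with the range $s \in [0, 1/2]$). Fix $s \in [1/2, 1]$ and let $\la_n^{\circ} := n - s$ be the reference sequence; then $\{\cos \la_n^{\circ} x\}_{n \ge 1}$ is a Riesz basis in $L_2(0, \pi)$ (orthogonal at $s = 1/2$ and $s = 1$, corresponding to the Neumann--Dirichlet and Neumann--Neumann eigenfunctions, and a classical bounded perturbation of an orthogonal basis for intermediate $s$). The approach is to compare $\{\cos \la_n x\}_{n \ge 1}$ with this reference via a generating entire function, and to show that all constants in the comparison depend only on $\Omega$ and $\de$.

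After even extension to $(-\pi, \pi)$, the task reduces to the uniform Riesz basis property of the exponential system $\{e^{\pm i \la_n x}\}_{n \ge 1}$ in $L_2(-\pi, \pi)$. The standard device is the generating function
\[
F(\rho) := \prod_{n \ge 1} \left( 1 - \frac{\rho^2}{\la_n^2} \right)
\]
(with the obvious modification if $\la_1 = 0$). One checks that $F$ is entire of exponential type $\pi$; comparing $F$ with the analogous product $F_0$ built from $\{n - s\}$, which agrees up to normalization with $\cos \pi \rho$ or $\sin \pi \rho / (\pi \rho)$, one aims at two-sided bounds $c_1 \le |F(\rho)/F_0(\rho)| \le c_2$ on horizontal lines $|\operatorname{Im} \rho| = c(\de)$, together with a uniform lower bound for $|F'(\la_n)|$. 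The functions $f_n(\rho) := F(\rho)/[F'(\la_n)(\rho - \la_n)]$ then supply the biorthogonal system to $\{e^{i \la_n x}\}$, and the uniform Riesz basis property follows from a quantitative form of the Levin--Pavlov theorem on sine-type functions (equivalently, from a quantitative Bari criterion).

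The main obstacle is to make every constant depend only on $\Omega$ and $\de$. For a single sequence the above construction is classical; here the delicate step is the infinite-product comparison of $F$ and $F_0$. Writing $\la_n = (n - s) + \eps_n$ with $\{\eps_n\}_{n \ge 1} \in l_2$ of norm at most $\Omega$, one expresses $\log(F/F_0)$ as a sum of terms of order $\eps_n/n$ and controls it by Cauchy--Schwarz, while the gap condition $\la_{n+1} - \la_n \ge \de$ prevents denominators from vanishing near the zeros and yields the lower bound on $|F'(\la_n)|$. Once these analytic estimates are available with constants depending only on $(\Omega, \de)$, the uniform Riesz basis conclusion is immediate from the perturbation theorem; this is precisely the content of the cited result of Hryniv.
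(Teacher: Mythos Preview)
The paper does not prove Proposition~\ref{prop:Rb} at all: it is quoted directly from Hryniv~\cite{Hryn10} and used as a black box, so there is no argument in the paper to compare your proposal against.

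Your sketch is a plausible outline of the sine-type generating-function route to such a result, and you correctly isolate the two analytic ingredients that must be made uniform in $(\Omega,\de)$: two-sided bounds on $|F/F_0|$ on horizontal lines bounded away from the zeros, and a lower bound on $|F'(\la_n)|$. A few details are imprecise as written. The reference product $F_0$ built from $\{n-s\}$ equals $\cos\pi\rho$ only at $s=1/2$ and (up to a factor of $\rho$) $\sin\pi\rho$ only at $s=1$; for intermediate $s$ it is a genuinely different sine-type function, so the comparison has to be carried out against that function, not against $\cos\pi\rho$ or $\sin\pi\rho/(\pi\rho)$. Also, the reduction ``even extension to $(-\pi,\pi)$ reduces to $\{e^{\pm i\la_n x}\}$'' needs a word about how one handles the case $\la_1=0$ (a single function, not a pair) and why the Riesz-basis property passes between the exponential system on $L_2(-\pi,\pi)$ and the cosine system on $L_2(0,\pi)$. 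Finally, the last sentence of your proposal effectively defers the actual work back to \cite{Hryn10}; if the intent is to supply an independent proof, the infinite-product estimates and the quantitative Bari/Pavlov step should be written out rather than invoked.
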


Lemma~\ref{lem:stabd} and Proposition~\ref{prop:Rb} imply the following corollary.

\begin{cor} \label{cor:Rb}
Under the hypothesis of part (i) of Lemma~\ref{lem:stabd}, the Riesz bases $\{ \sin \mu_n t \}_{n \ge 1}$, and $\{ \cos \nu_n t \}_{n \ge 1}$ are uniformly bounded in $L_2(0,\pi)$. The corresponding operators $T$ satisfy the estimates $\| T \|, \, \| T^{-1} \| \le C(Q_+)$.
\end{cor}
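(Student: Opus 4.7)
The plan is to verify that both sequences $\{\mu_n\}_{n \ge 1}$ and $\{\nu_n\}_{n \ge 1}$ belong to the classes $\mathscr L_{\Omega',\delta',s}$ appearing in Proposition~\ref{prop:Rb} with parameters $\Omega'$, $\delta'$ depending only on $Q_+$, after which the conclusion follows by a single invocation of that proposition with the appropriate value of $s$ in each case.

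First I would examine $\{\mu_n\}$. The asymptotics \eqref{asymptmu} give $\mu_n - (n - 1/2) = \omega_+/(\pi n) + \xi_n/n$, so by the triangle inequality and Lemma~\ref{lem:stabd}(i),
$$
\bigl\| \{ \mu_n - n + 1/2 \} \bigr\|_{l_2} \le \frac{|\omega_+|}{\pi} \bigl\| \{1/n\} \bigr\|_{l_2} + \bigl\| \{ \xi_n \} \bigr\|_{l_2} \le \frac{\Omega(Q_+)}{\sqrt{6}} + \Omega(Q_+) =: \Omega'(Q_+).
$$
The normalization \eqref{bound0} gives $\mu_n \ge 1/2 > 0$, and the gap bound $\mu_{n+1} - \mu_n \ge \delta(Q_+)$ is provided by Lemma~\ref{lem:stabd}(i). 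If necessary, I replace $\delta(Q_+)$ by $\min(\delta(Q_+), 1/2)$, which still depends only on $Q_+$ and lies in $(0,1)$, so the gap hypothesis of Proposition~\ref{prop:Rb} is met. Thus $\{ \mu_n \} \in \mathscr L_{\Omega',\delta',1/2}$, and since $s = 1/2 \in [0, 1/2]$, Proposition~\ref{prop:Rb} yields that $\{ \sin \mu_n t \}_{n \ge 1}$ belongs to a uniformly bounded family of Riesz bases in $L_2(0,\pi)$.

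An identical argument, now using \eqref{asymptmu} for $\nu_n$ in place of $\mu_n$ (yielding $\nu_n - (n-1) = \omega_+/(\pi n) + \tau_n/n$), places $\{\nu_n\}$ in $\mathscr L_{\Omega',\delta',1}$ with the same choice of $\Omega'$, $\delta'$. Here $s = 1 \in [1/2, 1]$, so Proposition~\ref{prop:Rb} gives that $\{\cos \nu_n t\}_{n \ge 1}$ is a member of a uniformly bounded family of Riesz cosine bases in $L_2(0,\pi)$. In both cases the associated operators $T$ mapping the standard orthonormal sine/cosine bases onto $\{\sin \mu_n t\}$ and $\{\cos \nu_n t\}$ satisfy $\|T\|, \|T^{-1}\| \le C(Q_+)$, which is precisely the claim.

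The argument is essentially a matching of hypotheses, so there is no genuine obstacle. The only points requiring a moment of attention are the two subtly different values of $s$ (namely $s = 1/2$ for the sines and $s = 1$ for the cosines, each lying in the correct admissible range in Proposition~\ref{prop:Rb}) and the purely cosmetic truncation of the gap constant $\delta(Q_+)$ to force it into $(0,1)$.
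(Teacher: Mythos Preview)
Your argument is correct and is exactly the approach the paper intends: it states only that Lemma~\ref{lem:stabd} and Proposition~\ref{prop:Rb} imply the corollary, and you have simply filled in the routine verification that $\{\mu_n\}\in\mathscr L_{\Omega',\de',1/2}$ and $\{\nu_n\}\in\mathscr L_{\Omega',\de',1}$ with parameters depending only on $Q_+$. The choice of $s=1/2$ for the sine basis and $s=1$ for the cosine basis is exactly right, and the truncation of $\de$ to ensure $\de\in(0,1)$ is harmless.
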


The function $\Delta(\rho)$ can be recovered from its zeros as an infinite product (see, e.g., \cite{FY01}):
\begin{equation} \label{prodDelta}
\Delta(\rho) = 2 \pi (\rho_1^2 - \rho^2) \prod_{n = 1}^{\infty} \frac{\rho_{n+1}^2 - \rho^2}{(n/2)^2}.
\end{equation}

We need the following lemma on the uniform boundedness and stability for recovering the function $\mathscr M$ in \eqref{intDelta} from the zeros $\{ \rho_n \}_{n \ge 1}$ of $\Delta(\rho)$.

\begin{lem} \label{lem:difM}
Suppose that $\{ \rho_n^{(i)} \}_{n \ge 1} \in B_\Omega$ for some $\Omega > 0$ and $i = 1, 2$. Then
\begin{align} \label{estM}
\| \mathscr M^{(i)} \|_{L_2(0,\pi)} & \le C(\Omega), \quad i = 1, 2, \\ \label{difM}
\| \mathscr M^{(1)} - \mathscr M^{(2)} \|_{L_2(0,\pi)} & \le C(\Omega) \left( |\om^{(1)} - \om^{(2)}| + \sqrt{\sum_{n \ge 1} |\varkappa_n^{(1)} - \varkappa_n^{(2)}|^2} \right).
\end{align}
\end{lem}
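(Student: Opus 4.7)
My plan is to evaluate the integral representation \eqref{intDelta} at the half-integer points $\rho = n/2$, $n \ge 0$. Since $\sin n\pi = 0$ and $\cos n\pi = (-1)^n$, one immediately obtains
$$b_n := \Delta(n/2) - \om(-1)^n = \int_0^{\pi} \mathscr M(t) \cos nt\,dt, \qquad n \ge 0.$$
Thus $\{b_n\}$ are, up to a fixed normalization, the Fourier cosine coefficients of $\mathscr M$ in $L_2(0,\pi)$, and Parseval's identity for the orthogonal system $\{1, \cos t, \cos 2t, \dots\}$ gives
$$\|\mathscr M\|_{L_2(0,\pi)}^2 = \frac{|b_0|^2}{\pi} + \frac{2}{\pi}\sum_{n \ge 1}|b_n|^2.$$
This reduces both \eqref{estM} and \eqref{difM} to the $l_2$-estimates of $\{b_n\}$ and $\{b_n^{(1)} - b_n^{(2)}\}$ in terms of $\om$ and $\{\varkappa_n\}$.

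To carry out these $l_2$-estimates, I would compute $\Delta(n/2)$ from the product representation \eqref{prodDelta}. Among the factors $(\rho_{k+1}^2 - (n/2)^2)/(k/2)^2$, the one indexed by $k = n$ is ``resonant'' since $\rho_{n+1}$ is the eigenvalue closest to $n/2$. The asymptotic \eqref{asympt} yields
$$\rho_{n+1}^2 - (n/2)^2 = \frac{\om}{\pi} + \varkappa_{n+1} + O(1/n),$$
so, after comparison with the corresponding factor of the product expansion of $-\rho \sin 2\pi\rho$ at the free zero $k/2$, the leading contribution of this resonant factor is exactly $\om(-1)^n$, which cancels the subtraction in $b_n$. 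The next-order contribution is the $l_2$-term $\pi(-1)^n \varkappa_{n+1}$. The remaining infinite product over $k \ne n$ is compared with the corresponding product for the free zeros $\{(k-1)/2\}_{k \ge 1}$; using $|\rho_{k+1} - k/2| = O(1/k)$ with an $l_2$-remainder and logarithmic linearization, one shows that this ratio equals $1 + O(1/n)$ uniformly for $\{\rho_n\} \in B_{\Omega}$. This yields \eqref{estM}.

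For the stability part \eqref{difM}, the same decomposition is applied to both data sets and differences are taken factor by factor. The uniform bounds ensured by $\{\rho_n^{(i)}\} \in B_{\Omega}$ permit linearizing the infinite products with controlled remainders, so that $b_n^{(1)} - b_n^{(2)}$ splits into a resonant part bounded by $|\om^{(1)} - \om^{(2)}| + |\varkappa_{n+1}^{(1)} - \varkappa_{n+1}^{(2)}|$ and a non-resonant part controlled by $|\om^{(1)} - \om^{(2)}| + \|\{\varkappa_k^{(1)} - \varkappa_k^{(2)}\}\|_{l_2}$ (multiplied by $O(1/n)$). Summation in $n$ and Parseval then give \eqref{difM}.

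The principal technical obstacle is the uniform and Lipschitz control of the non-resonant infinite products: one must verify that their tails converge uniformly in $n$ and that their joint dependence on the shifted sequences $\{\rho_k^{(i)}\}$ is Lipschitz with constants depending only on $\Omega$. This is a bookkeeping exercise in comparing infinite products, analogous to the reconstruction of sine-type entire functions from their zeros as in \cite{But22}, but the isolation of the single resonant index $k = n$ makes the estimates uniform in $n$ after division by the known free product.
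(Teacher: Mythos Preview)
Your approach is correct and is essentially the method of \cite{But22} that the paper invokes: extract the cosine Fourier coefficients of $\mathscr M$ by evaluating $\Delta$ at the free zeros $n/2$, then control $\Delta(n/2)$ via the product \eqref{prodDelta} by isolating the resonant factor $k=n$ and comparing the remaining product with that of $-\rho\sin 2\pi\rho$. The paper itself gives no self-contained proof here; it cites \cite{But22} for the case $\om=0$ and says the general case $\om\ne 0$ follows ``by applying a shift'' together with calculations as in \cite{But21}.

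The only substantive difference is how the constant $\om$ is handled. The paper reduces to $\om=0$ by a preliminary shift of the data, whereas you treat general $\om$ directly: your subtraction $b_n=\Delta(n/2)-\om(-1)^n$ plays the role of that shift, and the cancellation you identify between $\om(-1)^n$ and the leading part $\pi(-1)^n\cdot\tfrac{\om}{\pi}$ of the resonant factor is exactly what the shift accomplishes in the paper's route. Your direct treatment is arguably cleaner, since it avoids tracking how the shift acts on the remainder sequence $\{\varkappa_n\}$. One caveat: the non-resonant ratio $P_n/P_n^0$ is not quite $1+O(1/n)$ pointwise but rather $1+\eta_n$ with $\{\eta_n\}\in l_2$ (this is the sharp statement proved in \cite{But22}); this is what you actually need, and it suffices for both \eqref{estM} and \eqref{difM}.
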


For the case $\om^{(i)} = 0$, the uniform estimates \eqref{estM} and \eqref{difM} have been proved in \cite{But22}. The estimates of Lemma~\ref{lem:difM} in the general case $\om^{(i)} \ne 0$ are obtained by applying a shift and using technical calculations, which were presented, e.g., in \cite{But21} for another type of boundary conditions.

Finally, consider the problem of reconstruction of $q(x)_{|x \in (0,\pi)}$ and $h$ from the functions $\vv(\pi, \rho)$ and $\vv'(\pi,\rho)$ represented by \eqref{intvv} and \eqref{intvvp}, respectively. We call the collection $( \mathscr K, \mathscr K_0, \om_- )$ \textit{the Cauchy data} of the Sturm-Liouville problem for equation \eqref{eqv} on $(0,\pi)$ with the boundary conditions
$$
y'(0) - h y(0) = 0, \quad y'(\pi) = 0.
$$

Using the Cauchy data, one can find the Weyl function $\dfrac{\vv'(\pi, \rho)}{\vv(\pi,\rho)}$ by formulas \eqref{intvv} and \eqref{intvvp} and then uniquely reconstruct $q(x)_{|x \in (0,\pi)}$ and $h$ by standard methods (see, e.g., \cite{FY01}). The uniform stability of recovering $q$ and $h$ from the Cauchy data has been proved in \cite{Bond24}:

\begin{prop}[\cite{Bond24}] \label{prop:Cauchy}
Suppose that $|h^{(i)}| + \| q^{(i)} \|_{L_2(0,\pi)} \le Q_-$ for a constant $Q_- > 0$ and $i = 1, 2$. Then
$$
\| q^{(1)} - q^{(2)} \|_{L_2(0, \pi)} + |h^{(1)} - h^{(2)}| \le C(Q_-) \bigl( \| \mathscr K^{(1)} - \mathscr K^{(2)} \|_{L_2(0, \pi)} + \| \mathscr K_0^{(1)} - \mathscr K_0^{(2)} \|_{L_2(0,\pi)} + |\om_-^{(1)} - \om_-^{(2)}|\bigr).
$$
\end{prop}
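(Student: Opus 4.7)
The plan is to reduce recovery from the Cauchy data to the classical two-spectra inverse Sturm-Liouville problem on $(0,\pi)$, for which uniform stability is available.

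First, I would observe that the representations \eqref{intvv} and \eqref{intvvp} allow one to read off the entire functions $\vv(\pi,\rho)$ and $\vv'(\pi,\rho)$ directly from the Cauchy data $(\mathscr K,\mathscr K_0,\om_-)$. The squared zeros of these two functions are, respectively, the eigenvalues $\{\tilde\mu_n^2\}_{n\ge 1}$ and $\{\tilde\nu_n^2\}_{n\ge 1}$ of the boundary value problems for \eqref{eqv} on $(0,\pi)$ with the common left condition $y'(0)-hy(0)=0$ and the right conditions $y(\pi)=0$ and $y'(\pi)=0$, respectively. Their standard asymptotics take the form
$$
\tilde\mu_n = n-\tfrac{1}{2}+\tfrac{\om_-}{\pi n}+\tfrac{\tilde\xi_n}{n}, \qquad \tilde\nu_n = n-1+\tfrac{\om_-}{\pi n}+\tfrac{\tilde\tau_n}{n}, \qquad \{\tilde\xi_n\},\{\tilde\tau_n\}\in l_2.
$$
After a shift $s=s(Q_-)$ as in Section~2, both sequences can be assumed uniformly bounded away from zero for $(q,h)$ in the ball of radius $Q_-$.

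The first substantive step is to establish the intermediate stability estimate
$$
\bigl\|\{\tilde\xi_n^{(1)}-\tilde\xi_n^{(2)}\}\bigr\|_{l_2}+\bigl\|\{\tilde\tau_n^{(1)}-\tilde\tau_n^{(2)}\}\bigr\|_{l_2} \le C(Q_-)\bigl(\|\mathscr K^{(1)}-\mathscr K^{(2)}\|_{L_2(0,\pi)}+\|\mathscr K_0^{(1)}-\mathscr K_0^{(2)}\|_{L_2(0,\pi)}+|\om_-^{(1)}-\om_-^{(2)}|\bigr).
$$
To prove this, I would subtract the identities $\vv^{(1)}(\pi,\tilde\mu_n^{(1)})=0$ and $\vv^{(2)}(\pi,\tilde\mu_n^{(2)})=0$, apply a Taylor expansion of $\vv^{(2)}(\pi,\cdot)$ around $\tilde\mu_n^{(2)}$, and exploit uniform lower bounds on $\bigl|\tfrac{\partial}{\partial\rho}\vv^{(2)}(\pi,\cdot)\bigr|$ at the (uniformly separated) zeros. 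These lower bounds follow from Riesz-basis arguments analogous to Corollary~\ref{cor:Rb} adapted to the left boundary condition $y'(0)-hy(0)=0$. The resulting pointwise estimates on $|\tilde\mu_n^{(1)}-\tilde\mu_n^{(2)}|$ involve the difference of the integrands in \eqref{intvv} evaluated at $\tilde\mu_n^{(1)}$; I would then pass from a pointwise to an $l_2$-summed bound by recognizing these quantities as Fourier-type coefficients of $\mathscr K_0^{(1)}-\mathscr K_0^{(2)}$ with respect to the sine basis $\{\sin\tilde\mu_n^{(1)}t\}$, whose uniform boundedness in $L_2(0,\pi)$ is guaranteed by Proposition~\ref{prop:Rb}. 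An analogous treatment of \eqref{intvvp} handles $\{\tilde\tau_n\}$.

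Finally, by the classical Borg theorem, the two spectra $\{\tilde\mu_n^2\},\{\tilde\nu_n^2\}$ together with $\om_-$ uniquely determine $(q|_{(0,\pi)},h)$, and uniform stability of the two-spectra inverse Sturm-Liouville problem (Savchuk-Shkalikov \cite{SS10}) yields
$$
\|q^{(1)}-q^{(2)}\|_{L_2(0,\pi)}+|h^{(1)}-h^{(2)}| \le C(Q_-)\bigl(|\om_-^{(1)}-\om_-^{(2)}|+\bigl\|\{\tilde\xi_n^{(1)}-\tilde\xi_n^{(2)}\}\bigr\|_{l_2}+\bigl\|\{\tilde\tau_n^{(1)}-\tilde\tau_n^{(2)}\}\bigr\|_{l_2}\bigr).
$$
Concatenating this with the intermediate estimate of the previous paragraph gives the proposition. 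The main obstacle is the second step: converting pointwise Rouché-type bounds on individual eigenvalue perturbations into an $l_2$-summed estimate whose constant depends only on $Q_-$. This rests on the uniform separation of zeros and the uniform derivative lower bounds, both of which become accessible only after fixing the radius $Q_-$ and applying a single shift $s(Q_-)$ valid throughout the ball.
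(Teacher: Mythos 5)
The paper itself does not prove Proposition~\ref{prop:Cauchy}: it is imported verbatim from \cite{Bond24}, where it is established (for the more general non-self-adjoint operator) by working directly with the Weyl function and the main equation of the inverse problem rather than with auxiliary spectra. Your proposal is therefore a genuinely different route: a self-contained reduction, valid in the self-adjoint setting at hand, from the Cauchy data $(\mathscr K, \mathscr K_0, \om_-)$ to the two spectra $\{\tilde\mu_n^2\}$, $\{\tilde\nu_n^2\}$ of the Robin--Dirichlet and Robin--Neumann problems on $(0,\pi)$, followed by the Savchuk--Shkalikov uniform two-spectra stability. The overall architecture is sound and is in fact the mirror image of the computation the paper carries out in Lemma~\ref{lem:estkn} (there one goes from a kernel to values at prescribed points; here you go from the kernels to the zeros). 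What the citation buys the author is coverage of the non-self-adjoint case and avoidance of exactly the step you identify as the main obstacle; what your route buys is a proof using only ingredients already present in the paper (Proposition~\ref{prop:Rb}, the transformation-operator representations \eqref{intvv}--\eqref{intvvp}, and \cite{SS10}).

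Two points in your sketch deserve more care before it could be called complete. First, the passage from the pointwise bound $|\tilde\mu_n^{(1)}-\tilde\mu_n^{(2)}|\le C n^{-1}\bigl(|\om_-^{(1)}-\om_-^{(2)}|+|c_n|\bigr)$ to the $l_2$ bound on $\{\tilde\xi_n^{(1)}-\tilde\xi_n^{(2)}\}$ is not immediate: multiplying by $n$ leaves the non-summable constant $|\om_-^{(1)}-\om_-^{(2)}|$ in every term, and one must verify that this contribution cancels exactly against the term $-(\om_-^{(1)}-\om_-^{(2)})/\pi$ in the definition of $\tilde\xi_n^{(1)}-\tilde\xi_n^{(2)}$, leaving only $l_2$-summable remainders; this works because the leading $\om_-$-perturbation of $\tilde\mu_n$ is precisely $\om_-/(\pi n)$, but it must be tracked explicitly, together with a preliminary a priori bound $|\tilde\mu_n^{(1)}-\tilde\mu_n^{(2)}|\le C(Q_-)/n$ to control the Taylor remainder and the term $\int_0^\pi\mathscr K_0^{(2)}(t)(\sin\tilde\mu_n^{(1)}t-\sin\tilde\mu_n^{(2)}t)\,dt$. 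Second, you should check that the uniform stability result of \cite{SS10} applies verbatim to the pair of spectra with a Robin condition at $x=0$ (with $h$ among the unknowns) rather than the Dirichlet configurations treated there, or supply the standard adaptation. Neither issue is fatal in the self-adjoint case, but note that your route would break down in the non-self-adjoint generality of \cite{Bond24}, for the same reason the paper's Discussion gives for the failure of its own method there.
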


\section{Proofs}

In this section, we prove Theorem~\ref{thm:main}.

Let us substitute \eqref{intvv} and \eqref{intvvp} into \eqref{defDelta}. Putting $\rho = \mu_n$ and $\rho = \nu_n$, we obtain the relations
\begin{equation} \label{relK}
\int_0^{\pi} \mathscr K_0(t) \sin \mu_n t \, dt = k_{0,n}, \quad \int_0^{\pi} \mathscr K(t) \cos \nu_n t \, dt = k_n, \quad n \ge 1,
\end{equation}
where
\begin{align} \label{defk0n}
k_{0, n} & := -\mu_n \left( \cos \mu_n \pi + \om_- \frac{\sin \mu_n \pi}{\mu_n} + \frac{\Delta(\mu_n)}{\psi'(\pi, \mu_n)}\right),  \\ \label{defkn}
k_n & := \nu_n \sin \nu_n \pi - \om_- \cos \nu_n \pi + \frac{\Delta(\nu_n)}{\psi(\pi, \nu_n)}.
\end{align}

Our proof of Theorem~\ref{thm:main} relies on the following theoretical algorithm for solving Inverse Problem~\ref{ip:HL}:

\begin{alg} \label{alg:sol}
Suppose that the mixed data $\mathfrak S = \bigl( q(x)_{|x \in (\pi, 2\pi)}; H; \{ \rho_n \}_{n \ge 1} \bigr)$ are given. We have to find $q(x)_{|x \in (0,\pi)}$ and $h$.
\begin{enumerate}
\item Using $\{ \rho_n \}_{n \ge 1}$, construct $\om$ and $\Delta(\rho)$ by formulas \eqref{findom} and \eqref{prodDelta}, respectively.
\item Using $q(x)_{|x \in (\pi, 2\pi)}$ and $H$, find $\om_+$ by \eqref{om+}, $\om_- := \om - \om_+$, $\{ \mu_n \}_{n \ge 1}$, $\{ \nu_n \}_{n \ge 1}$, $\psi(\pi,\nu_n)$ and $\psi'(\pi, \mu_n)$ for $n \ge 1$.
\item Find $\{ k_{0,n} \}_{n \ge 1}$ and $\{ k_n \}_{n \ge 1}$ by \eqref{defk0n} and \eqref{defkn}, respectively.
\item Determine $\mathscr K_0(t)$ and $\mathscr K(t)$ satisfying \eqref{relK}.
\item Recover $q(x)_{|x \in (0,\pi)}$ and $h$ from the Cauchy data $( \mathscr K, \mathscr K_0, \om_- )$.
\end{enumerate}
\end{alg}

Algorithm~\ref{alg:sol} is analogous to the interpolation method of \cite{But09, But11, MP10}.

\begin{lem} \label{lem:estkn}
For any $\mathfrak S^{(i)} \in R_{Q_+} \times B_{\Omega}$ satisfying \eqref{bound0}, $i = 1, 2$, the following uniform estimates hold:
\begin{align*} 
\| \{ k_{0,n}^{(i)}\} \|_{l_2} + \| \{ k_n^{(i)} \} \|_{l_2} & \le C(Q_+, \Omega), \quad i = 1, 2, \\ 
\| \{ k_{0,n}^{(1)} - k_{0,n}^{(2)} \} \|_{l_2} + \| \{ k_{n}^{(1)} - k_{n}^{(2)} \} \|_{l_2} & \le C(Q_+, \Omega) d(\mathfrak S^{(1)}, \mathfrak S^{(2)}).
\end{align*}
\end{lem}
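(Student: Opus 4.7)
The plan is to analyze the formulas \eqref{defk0n}--\eqref{defkn} by substituting the integral representations \eqref{intDelta}, \eqref{intpsi}, \eqref{intpsip} and eliminating the non-$l_2$ leading contributions via the eigenvalue conditions $\psi(\pi,\mu_n) = 0$ and $\psi'(\pi,\nu_n) = 0$. For $k_{0,n}$, the condition $\psi(\pi,\mu_n) = 0$ yields $\mu_n \cos \mu_n \pi = -\om_+ \sin \mu_n \pi - \int_0^\pi \mathscr N_0(t) \sin \mu_n t \, dt$, which cancels the first term of \eqref{defk0n}. Expanding $\Delta(\mu_n)$ through \eqref{intDelta} with the double-angle identities $\sin 2\mu_n \pi = 2\sin\mu_n\pi\cos\mu_n\pi$ and $\cos 2\mu_n\pi = 1 - 2\sin^2\mu_n\pi$, reusing the same substitution to express $\cos\mu_n\pi$, and dividing by $\psi'(\pi,\mu_n) = \mu_n \sin \mu_n \pi + O(1)$ (with uniform lower bound $|\psi'(\pi,\mu_n)| \ge c(Q_+)\mu_n$ coming from $|\sin\mu_n\pi| \ge c(Q_+)$, which in turn follows from the asymptotics $\mu_n = n - 1/2 + O(n^{-1})$ and the gap condition in Lemma~\ref{lem:stabd}), one arrives after algebraic cancellation at an expansion of the form
\[
k_{0,n} = \om\bigl(\sin\mu_n\pi - (\sin\mu_n\pi)^{-1}\bigr) - \int_0^\pi \mathscr N_0(t)\sin\mu_n t\,dt - \frac{1}{\sin\mu_n\pi}\int_0^\pi \mathscr M(t)\cos 2\mu_n t\,dt + r_n,
\]
with an $l_2$ remainder $r_n$ built from products of small factors arising from the expansion of $\bigl(1 + O(1/n)\bigr)^{-1}$. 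A completely parallel computation starting from $\psi'(\pi,\nu_n) = 0$ produces the analogous expression for $k_n$.

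The uniform $l_2$ bound follows term by term. Using $\sin \mu_n \pi = (-1)^{n+1} + O(n^{-2} + |\xi_n|/n)$, one checks that the prefactor $\sin\mu_n\pi - (\sin\mu_n\pi)^{-1} = O(n^{-2} + |\xi_n|/n)$ is $l_2$-summable with norm $\le C(\Omega)$, so that $\om$ times this quantity is $l_2$-bounded by $C(\Omega)$. The sequence $\{\int_0^\pi \mathscr N_0(t)\sin\mu_n t\,dt\}_{n\ge 1}$ is $l_2$-bounded by $C(Q_+)$, thanks to the uniform Riesz basis property of $\{\sin\mu_n t\}_{n \ge 1}$ from Corollary~\ref{cor:Rb} combined with $\|\mathscr N_0\|_{L_2} \le C(Q_+)$ from \eqref{estN}. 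The sequence $\{\int_0^\pi \mathscr M(t)\cos 2\mu_n t\,dt\}_{n\ge 1}$ is $l_2$-bounded by $C(Q_+,\Omega)$ via a Bessel-type estimate, since $\{\cos 2\mu_n t\}_{n\ge 1}$ is an $l_2$-perturbation of the Bessel subsystem $\{\cos(2n-1)t\}_{n\ge 1}$ of the orthogonal cosine basis of $L_2(0,\pi)$, combined with $\|\mathscr M\|_{L_2} \le C(\Omega)$ from Lemma~\ref{lem:difM}; division by $\sin\mu_n\pi$ only multiplies by the bounded factor $1/|\sin\mu_n\pi|$. The analogous argument for $k_n$ uses the uniform Riesz basis property of $\{\cos\nu_n t\}_{n\ge 1}$ (Proposition~\ref{prop:Rb} with $s = 1$).

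For the stability estimate I reuse the same decomposition for $k_{0,n}^{(1)} - k_{0,n}^{(2)}$ and $k_n^{(1)} - k_n^{(2)}$ and split each summand by add-and-subtract into three contributions: differences of the scalars $\om, \om_\pm$, controlled by $d(\mathfrak S^{(1)}, \mathfrak S^{(2)})$ via Lemma~\ref{lem:stabd}; differences of the auxiliary functions $\mathscr N_0, \mathscr N, \mathscr M$, controlled by \eqref{difN} and Lemma~\ref{lem:difM}; and differences generated by the displaced sample points $\mu_n^{(1)} \ne \mu_n^{(2)}$ and $\nu_n^{(1)} \ne \nu_n^{(2)}$, controlled by the mean value theorem applied to $\sin,\cos$ together with the $l_2$-stability $\|\{\xi_n^{(1)} - \xi_n^{(2)}\}\|_{l_2} + \|\{\tau_n^{(1)} - \tau_n^{(2)}\}\|_{l_2} \le C(Q_+) d(\mathfrak S^{(1)}, \mathfrak S^{(2)})$ from Lemma~\ref{lem:stabd}. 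The main technical obstacle is the systematic bookkeeping of the cancellations of non-$l_2$ leading terms in the boundedness step, together with the verification that each of these cancellations survives, in an appropriately perturbation-stable form, in the difference step; once the algebraic identity extracting the $l_2$-core of $k_{0,n}$ and $k_n$ is in place, the remaining work is a routine combination of the uniform direct-problem tools assembled in Section~2.
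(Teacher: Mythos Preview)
Your approach is correct and close in spirit to the paper's, but the algebraic organization differs in a way worth noting. The paper does \emph{not} invoke the eigenvalue identity $\psi(\pi,\mu_n)=0$; instead it clears the denominator, writing $k_{0,n}^{(i)}=-C_n^{(i)}/Z_n^{(i)}$ with $Z_n^{(i)}=\psi'{}^{(i)}(\pi,\mu_n^{(i)})$, and then substitutes \eqref{intpsip} and \eqref{intDelta} directly into the numerator to obtain
\[
\frac{C_n^{(i)}}{\mu_n^{(i)}} = -\mu_n^{(i)}\cos\mu_n^{(i)}\pi\,\sin\mu_n^{(i)}\pi + \om_-^{(i)}\cos^2\mu_n^{(i)}\pi - \om_+^{(i)}\sin^2\mu_n^{(i)}\pi + \cdots + \int_0^\pi \mathscr M^{(i)}(t)\cos 2\mu_n^{(i)}t\,dt,
\]
and reads off $|C_n^{(i)}|\le C(Q_+,\Omega)\bigl(n|\xi_n^{(i)}|+n|\hat{\mathscr M}_n^{(i)}|+1\bigr)$ purely from the asymptotics \eqref{asymptmu}; the difference estimate then follows from the elementary quotient bound \eqref{difCZ}. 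In particular, the paper never needs the Riesz basis property of $\{\sin\mu_n t\}$ here (Corollary~\ref{cor:Rb} is reserved for Lemma~\ref{lem:estCauchy}), and it replaces $\cos 2\mu_n t$ by the fixed system $\cos(2n-1)t$ before applying Bessel's inequality. Your route---substituting $\mu_n\cos\mu_n\pi=-\om_+\sin\mu_n\pi-\int_0^\pi\mathscr N_0\sin\mu_n t\,dt$ and expanding $1/\psi'(\pi,\mu_n)$---makes the cancellation of the constant $\om_+$ contribution more algebraically explicit, at the cost of introducing the extra $\mathscr N_0$-integral term that then forces you to spend Corollary~\ref{cor:Rb} already at this stage. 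Both arrangements work; the paper's is slightly more economical in the tools used, while yours makes the mechanism of cancellation more transparent.
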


\begin{proof}
Let us prove the desired estimates for $\{ k_{0,n}^{(i)} \}$. The proof for $\{ k_n^{(i)} \}$ is analogous.

Consider two collections $\mathfrak S^{(i)}$ ($i = 1,2$) satisfying the hypothesis of this lemma.
Denote
\begin{equation} \label{defCZ}
C_n^{(i)} := \bigl(\mu_n^{(i)} \cos \mu_n^{(i)} \pi + \om_-^{(i)} \sin \mu_n^{(i)} \pi\bigr) \tfrac{d}{dx}\psi^{(i)}(\pi, \mu_n^{(i)}) + \mu_n^{(i)} \Delta^{(i)}(\mu_n^{(i)}), \quad Z_n^{(i)} := \tfrac{d}{dx} \psi^{(i)}(\pi, \mu_n^{(i)}).
\end{equation}
Thus, $k_{0,n}^{(i)} = -\dfrac{C_n^{(i)}}{Z_n^{(i)}}$ and so
\begin{equation} \label{difCZ}
|k_{0,n}^{(1)} - k_{0,n}^{(2)}| \le \frac{|C_n^{(1)} - C_n^{(2)}|}{|Z_n^{(1)}|} + \frac{|C_n^{(2)}| |Z_n^{(1)} - Z_n^{(2)}|}{|Z_n^{(1)}||Z_n^{(2)}|}.
\end{equation}

Let us estimate the numerators and the denominators in \eqref{difCZ}. 
It follows from \eqref{asymptmu} and Lemma~\ref{lem:stabd} that
\begin{gather} \label{estmu}
|\mu_n^{(i)}| \le C(Q_+) n, \quad |\mu_n^{(1)} - \mu_n^{(2)}| \le C(Q_+)\frac{\| q^{(1)} - q^{(2)}\|_{L_2(\pi, 2\pi)} + |H^{(1)} - H^{(2)}|}{n}, \\
|\cos \mu_n^{(i)} \pi| \le \frac{C(Q_+)}{n}, \quad |\cos \mu_n^{(1)}\pi - \cos \mu_n^{(2)} \pi| \le C(Q_+) |\mu_n^{(1)} - \mu_n^{(2)}|, \\
|\sin \mu_n^{(i)} \pi| \le C(Q_+), \quad |\sin \mu_n^{(1)} \pi - \sin \mu_n^{(2)} \pi| \le C(Q_+) \frac{|\mu_n^{(1)} - \mu_n^{(2)}|}{n}
\end{gather}
for $n \ge 1$, $i = 1, 2$. 
Using \eqref{estN}, \eqref{difN}, and \eqref{estmu}, we estimate the integral term in \eqref{intpsip} and obtain
\begin{align} \label{estintN}
& \left| \int_0^{\pi} \mathscr N^{(i)}(t) \cos \mu_n^{(i)} t \, dt \right| \le C(Q_+)\| \mathscr N^{(i)} \|_{L_2(0,\pi)} \le C(Q_+), \\ \nonumber
& \left| \int_0^{\pi} \mathscr N^{(1)}(t) \cos \mu_n^{(1)} t \, dt - \int_0^{\pi} \mathscr N^{(2)}(t) \cos \mu_n^{(2)} t \, dt\right| \\ \nonumber & \le \left| \int_0^{\pi} (\mathscr N^{(1)}(t) - \mathscr N^{(2)}(t)) \cos \mu_n^{(1)} t \, dt \right| + \left| \int_0^{\pi} \mathscr N^{(2)}(t) (\cos \mu_n^{(1)} t - \cos \mu_n^{(2)} t) \, dt \right| \\ \nonumber & \le C(Q_+) \| \mathscr N^{(1)} - \mathscr N^{(2)} \|_{L_2(0,\pi)} + C(Q_+)\| \mathscr N^{(2)} \|_{L_2(0,\pi)} |\mu_n^{(1)} - \mu_n^{(2)}| \\ \label{difintN}
& \le C(Q_+) (\| q^{(1)} - q^{(2)}\|_{L_2(\pi, 2\pi)} + |H^{(1)} - H^{(2)}|).
\end{align}

Using \eqref{intpsip}, \eqref{defCZ}, and the estimates \eqref{estmu}--\eqref{difintN}, we get
\begin{gather} \label{estZ}
0 < c(Q_+) n \le |Z_n^{(i)}| \le C(Q_+) n, \quad i = 1, 2, \\ \label{difZ}
|Z_n^{(1)} - Z_n^{(2)}| \le C(Q_+) (\| q^{(1)} - q^{(2)}\|_{L_2(\pi, 2\pi)} + |H^{(1)} - H^{(2)}|), \quad n \ge 1.
\end{gather}

Next, substituting \eqref{intpsip} and \eqref{intDelta} into the formula \eqref{defCZ} for $C_n^{(i)}$, we derive
\begin{align} \nonumber
\frac{C_n^{(i)}}{\mu_n^{(i)}} = & -\mu_n^{(i)} \cos \mu_n^{(i)} \pi \sin \mu_n^{(i)} \pi + \om_-^{(i)} \cos^2 \mu_n^{(i)} \pi - \om_+^{(i)} \sin^2 \mu_n^{(i)}\pi \\ \label{smCn} & + \left( \cos \mu_n^{(i)}\pi + \om_-^{(i)} \frac{\sin \mu_n^{(i)} \pi}{\mu_n^{(i)}}\right) \int_0^{\pi} \mathscr N^{(i)}(t) \cos\mu_n^{(i)}t \, dt + \int_0^{\pi} \mathscr M^{(i)}(t) \cos 2 \mu_n^{(i)}t \, dt.
\end{align}
Note that
\begin{equation} \label{estom-}
|\om_-^{(i)}| \le |\om^{(i)}| + |\om_+^{(i)}| \le \Omega + Q_+, \quad |\om_-^{(1)} - \om_-^{(2)}| \le |\om^{(1)} - \om^{(2)}| + |\om_+^{(1)} - \om_+^{(2)}|
\end{equation}
Using \eqref{smCn} together with the asymptotics \eqref{asymptmu}, the estimates \eqref{estN}, \eqref{estmu}--\eqref{difintN}, \eqref{estom-}, and Lemma~\ref{lem:difM}, we obtain
\begin{align} \label{estC}
|C_n^{(i)}| & \le C(Q_+, \Omega) \bigl( n |\xi_n^{(i)}| + n |\hat{\mathscr M}_n^{(i)}| + 1 \bigr), \\ \label{difC}
|C_n^{(1)} - C_n^{(2)}| & \le C(Q_+, \Omega) \bigl( n |\xi_n^{(1)} - \xi_n^{(2)}| + n |\hat{\mathscr M}_n^{(1)} - \hat{\mathscr M}_n^{(2)} | + d(\mathfrak S^{(1)}, \mathfrak S^{(2)}) \bigr)
\end{align}
for $n \ge 1$, $i = 1, 2$, where $\hat {\mathscr M}_n^{(i)} := \int_0^{\pi} \mathscr M^{(i)}(t) \cos (2n-1)t \, dt$.

Combining \eqref{difCZ}, \eqref{estZ}, \eqref{difZ}, \eqref{estC}, and \eqref{difC}, we get
\begin{align*}
|k_{0,n}^{(i)}| & \le C(Q_+, \Omega) \bigl(|\xi_n^{(i)}| + |\hat{\mathscr M}_n^{(i)}| + n^{-1} \bigr), \\
|k_{0,n}^{(1)} - k_{0,n}^{(2)}| & \le C(Q_+, \Omega) \left( |\xi_n^{(1)} - \xi_n^{(2)}| + |\hat{\mathscr M}_n^{(1)} - \hat{\mathscr M}_n^{(2)} | + \frac{d(\mathfrak S^{(1)}, \mathfrak S^{(2)})}{n}\right).
\end{align*}
By summation, using Bessel's inequalities
$$
\sqrt{\sum_{n \ge 1} |\hat{\mathscr M}_n^{(i)}|^2} \le C \| \mathscr M^{(i)} \|_{L_2(0,\pi)}, \quad 
\sqrt{\sum_{n \ge 1} |\hat{\mathscr M}_n^{(1)} - \hat{\mathscr M}_n^{(2)} |^2} \le C \| \mathscr M^{(1)} - \mathscr M^{(2)} \|_{L_2(0, \pi)}
$$
together with Lemmas~\ref{lem:stabd} and~\ref{lem:difM}, we arrive at the estimates
$$
\sqrt{\sum_{n \ge 1} |k_{0,n}^{(i)}|^2} \le C(Q_+, \Omega), \quad 
\sqrt{\sum_{n \ge 1} |k_{0,n}^{(1)} - k_{0,n}^{(2)}|^2} \le C(Q_+, \Omega) d(\mathfrak S^{(1)}, \mathfrak S^{(2)}),
$$
which conclude the proof.
\end{proof}

\begin{lem} \label{lem:estCauchy}
For any $\mathfrak S^{(i)} \in R_{Q_+} \times B_{\Omega}$ $(i = 1, 2)$ satisfying \eqref{bound0}, the quantities $\mathscr K^{(i)}$, $\mathscr K_0^{(i)}$, and $\om_-^{(i)}$ obtained by Algorithm~\ref{alg:sol} fulfill the estimate
\begin{equation} \label{estCauchy}
\| \mathscr K^{(1)} - \mathscr K^{(2)} \|_{L_2(0, \pi)} + \| \mathscr K_0^{(1)} - \mathscr K_0^{(2)} \|_{L_2(0,\pi)} + |\om_-^{(1)} - \om_-^{(2)}| \le C(Q_+, \Omega) d(\mathfrak S^{(1)}, \mathfrak S^{(2)}).
\end{equation}
Thus, the mapping $\mathfrak S \mapsto ( \mathscr K, \mathscr K_0, \om_- )$ is Lipschitz continuous on $R_{Q_+} \times B_{\Omega}$.
\end{lem}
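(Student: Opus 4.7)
The plan is to bound the three summands on the left of \eqref{estCauchy} separately, combining Lemma~\ref{lem:estkn}, the uniform Riesz basis property of Corollary~\ref{cor:Rb}, and the $l_2$-stability of $\{\mu_n\}$, $\{\nu_n\}$ from Lemma~\ref{lem:stabd}. The term $|\om_-^{(1)}-\om_-^{(2)}|$ is immediate: since $\om_-^{(i)} = \om^{(i)} - \om_+^{(i)}$, the triangle inequality together with Lemma~\ref{lem:stabd} gives
$$|\om_-^{(1)}-\om_-^{(2)}| \le |\om^{(1)}-\om^{(2)}| + C(Q_+)\bigl(\|q^{(1)}-q^{(2)}\|_{L_2(\pi,2\pi)}+|H^{(1)}-H^{(2)}|\bigr) \le C(Q_+)\, d(\mathfrak S^{(1)},\mathfrak S^{(2)}).$$

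The essential work is the estimate for $\mathscr K_0^{(1)}-\mathscr K_0^{(2)}$; the one for $\mathscr K^{(1)}-\mathscr K^{(2)}$ will be fully analogous, using $\{\cos\nu_n^{(1)}t\}$ in place of $\{\sin\mu_n^{(1)}t\}$ and the $l_2$-stability of $\{\nu_n\}$. The difficulty is that the defining relations \eqref{relK} involve two \emph{different} sequences $\{\mu_n^{(1)}\}$ and $\{\mu_n^{(2)}\}$, so one cannot subtract the two systems of equations directly. The device I would use is to test $\mathscr K_0^{(1)}-\mathscr K_0^{(2)}$ against the \emph{single} Riesz basis $\{\sin\mu_n^{(1)}t\}_{n\ge 1}$, writing
$$\int_0^\pi (\mathscr K_0^{(1)}-\mathscr K_0^{(2)})(t)\sin\mu_n^{(1)}t\,dt = \bigl(k_{0,n}^{(1)} - k_{0,n}^{(2)}\bigr) - \int_0^\pi \mathscr K_0^{(2)}(t)\bigl(\sin\mu_n^{(1)}t - \sin\mu_n^{(2)}t\bigr)\,dt.$$
By Corollary~\ref{cor:Rb}, $\|\mathscr K_0^{(1)}-\mathscr K_0^{(2)}\|_{L_2(0,\pi)}$ is dominated by $C(Q_+)$ times the $l_2$-norm of the left-hand side. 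The first right-hand term contributes $\|\{k_{0,n}^{(1)} - k_{0,n}^{(2)}\}\|_{l_2} \le C(Q_+,\Omega)\, d(\mathfrak S^{(1)},\mathfrak S^{(2)})$ by Lemma~\ref{lem:estkn}. For the second term, Cauchy-Schwarz gives the pointwise bound $\|\mathscr K_0^{(2)}\|_{L_2(0,\pi)} \cdot \|\sin\mu_n^{(1)}t - \sin\mu_n^{(2)}t\|_{L_2(0,\pi)}$: the first factor is $\le C(Q_+,\Omega)$ by expanding $\mathscr K_0^{(2)}$ in the Riesz basis $\{\sin\mu_n^{(2)}t\}$ with coefficients $\{k_{0,n}^{(2)}\}$ and invoking Corollary~\ref{cor:Rb} with Lemma~\ref{lem:estkn}, while the elementary inequality $|\sin\mu_n^{(1)}t-\sin\mu_n^{(2)}t|\le \pi|\mu_n^{(1)}-\mu_n^{(2)}|$ yields $\|\sin\mu_n^{(1)}t-\sin\mu_n^{(2)}t\|_{L_2(0,\pi)}\le \pi^{3/2}|\mu_n^{(1)}-\mu_n^{(2)}|$. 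Squaring and summing, the asymptotics \eqref{asymptmu} and Lemma~\ref{lem:stabd} yield
$$\sum_{n\ge 1} |\mu_n^{(1)} - \mu_n^{(2)}|^2 \le C(Q_+)\bigl(\|q^{(1)}-q^{(2)}\|_{L_2(\pi,2\pi)}^2 + |H^{(1)}-H^{(2)}|^2\bigr) \le C(Q_+)\, d(\mathfrak S^{(1)},\mathfrak S^{(2)})^2,$$
which closes the estimate.

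The main obstacle is precisely the mismatch between the two interpolation systems; the testing-against-one-basis trick localizes that mismatch into an error term controlled by the $l_2$-stability of the $\mu_n$'s together with the uniform $L_2$-bound on $\mathscr K_0^{(2)}$ coming from the Riesz expansion itself. Beyond this observation, the proof is bookkeeping that assembles Lemma~\ref{lem:stabd}, Corollary~\ref{cor:Rb}, and Lemma~\ref{lem:estkn} in the right order.
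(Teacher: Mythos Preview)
Your argument is correct and rests on exactly the same three inputs as the paper's proof: Lemma~\ref{lem:estkn} for the $l_2$-control of the coefficients, Corollary~\ref{cor:Rb} for the uniform Riesz basis constants, and Lemma~\ref{lem:stabd} for the $l_2$-stability of $\{\mu_n\}$ and $\{\nu_n\}$. The organization, however, is dual to the paper's. The paper works on the \emph{synthesis} side: it expands $\mathscr K_0^{(i)}=\sum_n k_{0,n}^{(i)} v_n^{*(i)}$ in the biorthogonal bases, writes $\mathscr K_0^{(1)}-\mathscr K_0^{(2)}=(T^{(1)*})^{-1}\sum_n(k_{0,n}^{(1)}-k_{0,n}^{(2)})e_n+[(T^{(1)*})^{-1}-(T^{(2)*})^{-1}]\sum_n k_{0,n}^{(2)}e_n$, and then has to bound the operator norm $\|T^{(1)}-T^{(2)}\|$ (via a Hilbert--Schmidt bound coming from $\sum_n\|\sin\mu_n^{(1)}t-\sin\mu_n^{(2)}t\|_{L_2}^2$). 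You work on the \emph{analysis} side: you test $\mathscr K_0^{(1)}-\mathscr K_0^{(2)}$ against the single basis $\{\sin\mu_n^{(1)}t\}$ and push the mismatch into the scalar correction $\int_0^\pi\mathscr K_0^{(2)}(t)(\sin\mu_n^{(1)}t-\sin\mu_n^{(2)}t)\,dt$, so you never need the operator-difference estimate and can use $\sum_n|\mu_n^{(1)}-\mu_n^{(2)}|^2$ directly. Your route is marginally more elementary; the paper's route makes the role of the Riesz-basis operators $T^{(i)}$ more explicit. Either way the $\om_-$ estimate is handled identically, and the $\mathscr K$ case is the obvious cosine/$\nu_n$ analogue in both proofs.
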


\begin{proof}
Let $\mathfrak S^{(i)}$ ($i = 1, 2$) fulfill the hypothesis of the lemma.
Due to Corollary~\ref{cor:Rb}, the sequences $\{ v_n^{(i)} \}_{n \ge 1}$, $v_n^{(i)} := \sin \mu_n^{(i)} t$, $i = 1, 2$, are uniformly bounded Riesz bases in $L_2(0, \pi)$. Let $\{ v_n^{*(i)} \}_{n \ge 1}$ be the corresponding biorthonormal bases: $(v_n^{(i)}, v_k^{*(i)}) = \de_{n,k}$, where $\de_{n,k}$ is the Kronecker delta. Due to the definition of Riesz basis, $v_n^{(i)} = T^{(i)} e_n$ and $v_n^{*(i)} = (T^{(i)*})^{-1} e_n$, where $\{ e_n \}_{n \ge 1}$ is an orthonormal basis in $L_2(0,\pi)$ and $T^{(i)}$, $i = 1,2$, are the corresponding bounded linear operators.

Using \eqref{relK}, we obtain
$$
\mathscr K_0^{(i)}(t) = \sum_{n \ge 1} k_{0,n}^{(i)} v_n^{*(i)}(t), \quad i = 1,2.
$$
Hence
\begin{align} \nonumber
\mathscr K_0^{(1)} - \mathscr K_0^{(2)} & = \sum_{n \ge 1} \bigl( k_{0,n}^{(1)} - k_{0,n}^{(2)} \bigr) v_n^{*(1)} + \sum_{n \ge 1} k_{0,n}^{(2)} \bigl( v_n^{*(1)} - v_n^{*(2)}\bigr) \\ \label{reldifK0}
& = \bigl( T^{(1)*} \bigr)^{-1} \sum_{n \ge 1} \bigl( k_{0,n}^{(1)} - k_{0,n}^{(2)}\bigr) e_n + \bigl[ \bigl( T^{(1)*}\bigr)^{-1} - \bigl( T^{(2)*}\bigr)^{-1}\bigr] \sum_{n \ge 1} k_{0,n}^{(2)} e_n
\end{align}

By Corollary~\ref{cor:Rb}, we have
\begin{equation} \label{estT1}
\| (T^{(i)})^{-1} \| \le C(Q_+), \quad \| ( T^{(1)})^{-1} - ( T^{(2)})^{-1} \| \le C(Q_+) \| T^{(1)} - T^{(2)} \|.
\end{equation}
Note that
$$
(T^{(1)} - T^{(2)}) e_n(t) = \cos \mu_n^{(1)} t - \cos \mu_n^{(2)} t, \quad n \ge 1.
$$
Therefore, it follows from \eqref{estmu} that
\begin{equation} \label{estT2}
\| T^{(1)} - T^{(2)} \| \le C(Q_+) \bigl( \| q^{(1)} - q^{(2)} \|_{L_2(\pi, 2\pi)} + |H^{(1)} - H^{(2)}|\bigr).
\end{equation}
The estimates analogous to \eqref{estT1} and \eqref{estT2} are valid for the adjoint operators $T^{(i)*}$, $i = 1, 2$.

Combining \eqref{reldifK0}--\eqref{estT2} and the estimates of Lemma~\ref{lem:estkn}, we get
$$
\| \mathscr K_0^{(1)} - \mathscr K_0^{(2)} \|_{L_2(0,\pi)} \le C(Q_+, \Omega) d(\mathfrak S^{(1)}, \mathfrak S^{(2)}).
$$
Similarly, we obtain the same estimate for $\mathscr K^{(1)} - \mathscr K^{(2)}$. 
The desired estimate for $|\om_-^{(1)} - \om_-^{(2)}|$ follows from \eqref{estom-} and \eqref{difxin}. As a result, we arrive at \eqref{estCauchy}.
\end{proof}

\begin{proof}[Proof of Theorem~\ref{thm:main}]
Suppose that $(q^{(i)}, h^{(i)}, H^{(i)}) \in P_Q$ ($i = 1, 2$). Using part (ii) of Lemma~\ref{lem:stabd}, we conclude that the mixed data $\mathfrak S^{(i)}$ belong to $R_{Q_+} \times B_{\Omega}$, where $Q_+ = Q$ and $\Omega = \Omega(Q)$. Then, Lemma~\ref{lem:estCauchy} implies the estimate \eqref{estCauchy} with $C = C(Q)$. Applying Proposition~\ref{prop:Cauchy} with $Q_- = Q$ completes the proof.
\end{proof}

\section{Discussion}

Note that Lemma~\ref{lem:estCauchy} establishes the unconditional uniform stability for reconstruction of the triple $( \mathscr K, \mathscr K_0, \om_- )$ by using any data $\mathfrak S \in R_{Q_+} \times B_{\Omega}$. In other words, the constant $C$ in the estimate \eqref{estCauchy} depends only on the bounds $Q_+$ and $\Omega$ for the given data $\mathfrak S$.
However, not every $\mathfrak S \in R_{Q_+} \times B_{\Omega}$ are the mixed data for some problem of form \eqref{eqv}--\eqref{bc}. Therefore,
the triple $( \mathscr K, \mathscr K_0, \om_- )$, which is considered in Lemma~\ref{lem:estCauchy}, is not necessarily the Cauchy data corresponding to some potential $q(x)_{|x \in (0,\pi)}$ and $h$.

The known existence theorems for solution of the half-inverse problem either have a local nature (see, e.g., \cite{Sakh01, HM04, BB17}) or rely on reduction to a complete inverse spectral problem on the half-interval $(0,\pi)$ and impose some a posteriori requirements on the spectral data of that auxiliary problem (see, e.g., \cite{HM04, MP10, Bond20}). This does not allow us to find suitable a priori requirements on $\mathfrak S$ for the unconditional uniform stability of Inverse Problem~\ref{ip:HL}. Therefore, in Theorem~\ref{thm:main}, we impose the a priori bound $(q, h, H) \in P_Q$ on the parameters of the problem \eqref{eqv}--\eqref{bc} and so obtain the conditional uniform stability. Anyway, it is worth pointing out that this a priori bound is crucial only for the last step of the proof, that is, for the recovery of $q(x)_{|x \in (0,\pi)}$ and $h$ from the Cauchy data. All the previous steps of Algorithm~\ref{alg:sol} possess the unconditional uniform stability as summarized in Lemma~\ref{lem:estCauchy}.

It is worth mentioning that, for the special case $q^{(1)} = q^{(2)}$ a.e. on $(\pi, 2\pi)$ and $H^{(1)} = H^{(2)}$, the estimate \eqref{uni} can be obtained by developing the ideas of Horvath and Kiss \cite{HK10}. Indeed, Theorem~1.10 in \cite{HK10} contains the estimate analogous to \eqref{uni} (for the case of the Dirichlet boundary conditions) with the constant $C$, which depends on (i) $\| q \|_{L_1(0,\pi)}$, (ii) the bound of the Riesz basis $\{ \cos 2 \rho_n t \}_{n \ge 1}$. Using the uniform stability of the direct problem by Savchuk and Shkalikov \cite{SS10} and Proposition~\ref{prop:Rb} by Hryniv, one can easily show that the Riesz basis $\{ \cos 2 \rho_n t \}_{n \ge 1}$ is uniformly bounded and so $C = C(Q)$ for $(q, h, H) \in P_Q$. However, Horvath and Kiss \cite{HK10} have not considered the uniform boundedness of cosine bases and confined themselves by local stability results.

We also mention that our method does not work for the non-self-adjoint case, since the mapping $\bigl(q(x)_{|x \in (\pi, 2\pi)}, H\bigr) \mapsto \bigl( \{ \mu_n \}_{n \ge 1}, \{ \nu_n \}_{n \ge 1} \bigr)$ may violate Lipschitz continuity for multiple eigenvalues.

\medskip

{\bf Acknowledgement.} The author is grateful to Prof. Sergey A. Buterin for valuable discussions.

\medskip

{\bf Funding.} This work was supported by Grant 24-71-10003 of the Russian Science Foundation, https://rscf.ru/en/project/24-71-10003/.

\medskip

\noindent Natalia Pavlovna Bondarenko \\

\noindent 1. Department of Mechanics and Mathematics, Saratov State University, 
Astrakhanskaya 83, Saratov 410012, Russia, \\

\noindent 2. Department of Applied Mathematics and Physics, Samara National Research University, \\
Moskovskoye Shosse 34, Samara 443086, Russia, \\

\noindent 3. S.M. Nikolskii Mathematical Institute, Peoples' Friendship University of Russia (RUDN University), 6 Miklukho-Maklaya Street, Moscow, 117198, Russia, \\

\noindent 4. Moscow Center of Fundamental and Applied Mathematics, Lomonosov Moscow State University, Moscow 119991, Russia.\\

\noindent e-mail: {\it bondarenkonp@sgu.ru}


\begin{thebibliography}{00}

% Hochstadt-Lieberman problem

\bibitem{HL78}
Hochstadt, H.; Lieberman, B. An inverse Sturm-Liouville problem with mixed given data, SIAM J. Appl. Math. 34 (1978), 676--680.

\bibitem{Sakh01}
Sakhnovich, L. Half-inverse problems on the finite interval, Inverse Problems 17 (2001), 527--532.

\bibitem{HM04}
Hryniv, R.O.; Mykytyuk Ya.V. Half-inverse spectral problems for Sturm-Liouville operators with singular potentials, Inverse Problems 20 (2004), 1423--1444.

\bibitem{But09}
Buterin, S.A. On a constructive solution of the incomplete inverse Sturm-Liouville problem, Mathematika. Mekhanika, Saratov State University, Saratov 11 (2009), 8--12 [in Russian].

\bibitem{But11}
Buterin, S.A. On half inverse problem for differential pencils with the spectral parameter in boundary conditions, Tamkang J. Math. 42 (2011), 355--364

\bibitem{MP10}
Martinyuk, O.; Pivovarchik, V. On the Hochstadt-Lieberman theorem, Inverse Problems 26 (2010), 035011 (6pp).

\bibitem{BB17}
Bondarenko, N.; Buterin, S. On a local solvability and stability of the inverse transmission eigenvalue problem,  Inverse Problems 33 (2017), 115010.

\bibitem{Bond20}
Bondarenko, N.P. Solvability and stability of the inverse Sturm-Liouville problem with analytical functions in the boundary condition, Math. Meth. Appl. Sci. 43 (2020), no. 11, 7009-7021. 

% uniform stability

\bibitem{SS10}
Savchuk, A.M.; Shkalikov, A.A. Inverse problems for Sturm-Liouville operators with potentials in Sobolev spaces: Uniform stability, Funct. Anal. Appl. 44 (2010), no. 4, 270--285.

\bibitem{SS13}
Savchuk, A.M.; Shkalikov, A.A. Uniform stability of the inverse Sturm-Liouville problem with respect to the spectral function in the scale of Sobolev spaces, Proc. Steklov Inst. Math. 283 (2013), 181--196.

\bibitem{SS06}
Savchuk, A.M.; Shkalikov, A.A. On the eigenvalues of the Sturm-Liouville operator with potentials from Sobolev spaces, Math. Notes 80 (2006), no.~6, 814--832.

\bibitem{SS08}
Savchuk, A.M.; Shkalikov, A.A. On the properties of maps connected with inverse Sturm-Liouville problems, Proc. Steklov Inst. Math. 260 (2008), 218--237.

\bibitem{Hryn11}
Hryniv, R.O. Analyticity and uniform stability in the inverse singular Sturm-Liouville spectral problem, Inverse Problems 7 (2011), 065011.

\bibitem{Bond24}
Bondarenko, N.P. Uniform stability of the inverse problem for the non-self-adjoint Sturm-Liouville operator, arXiv:2409.16175.

\bibitem{But21}
Buterin, S. Uniform full stability of recovering convolutional perturbation of the Sturm-Liouville operator from the spectrum,
J. Diff. Eqns. 282 (2021), 67--103.

\bibitem{BD22}
Buterin, S.; Djuric, N. Inverse problems for Dirac operators with constant delay: uniqueness, characterization, uniform stability, Lobachevskii J. Math. 43 (2022), no. 6, 1492--1501.

\bibitem{Kuz23}
Kuznetsova, M. Uniform stability of recovering Sturm-Liouville-type operators with frozen argument, Results Math. 78 (2023), no.~5, Article ID 169.

% classical monographs

\bibitem{FY01}
Freiling, G.; Yurko, V. Inverse Sturm-Liouville Problems and Their Applications, Nova Science Publishers, Huntington, NY (2001).

% methods

\bibitem{Hryn10}
Hryniv, R.O. Uniformly bounded families of Riesz bases of exponentials, sines, and cosines, Math. Notes 87 (2010), no.~4m 510--520.

\bibitem{But22}
Buterin, S.A. On the uniform stability of recovering sine-type functions with asymptotically separated zeros, Math. Notes 111 (2022), no.~3, 343--355.

\bibitem{Mar11}
Marchenko, V.A. Sturm-Liouville Operators and Applications. Revised edition, AMS, Providence, 2011.

\bibitem{HK10}
Horv\'ath, M.; Kiss, M. Stability of direct and inverse eigenvalue problems for Schr\"odinger operators on finite intervals, Inter. Math. Research Notices 2010 (2010), no.~11, 2022--2063.

\end{thebibliography}
\end{document}